\definecolor{Red}{cmyk}{0,1,1,0}
\definecolor{verde}{cmyk}{1,0,1,0}
\definecolor{loka}{cmyk}{.5,0,1,.5}
\definecolor{azul}{cmyk}{1,1,0,0}
\numberwithin{equation}{section}
\newcommand{\be}{\begin{equation}}
\newcommand{\ee}{\end{equation}}
\newtheorem{definition}{Definition}
\newtheorem{theorem}{Theorem}
\newtheorem{remark}{Remark}
\newtheorem{lemma}{Lemma}
\begin{document}
\title{Fractional order pseudoparabolic partial differential equation: Ulam-Hyers stability}
\author{J. Vanterler da C. Sousa$^1$}
\address{$^1$ Department of Applied Mathematics, Institute of Mathematics,
 Statistics and Scientific Computation, University of Campinas --
UNICAMP, rua S\'ergio Buarque de Holanda 651,
13083--859, Campinas SP, Brazil\newline
e-mail: {\itshape \texttt{vanterlermatematico@hotmail.com, capelas@ime.unicamp.br }}}
\author{E. Capelas de Oliveira$^1$}

\begin{abstract} Using Gronwall inequality we will investigate the Ulam-Hyers and generalized Ulam-Hyers-Rassias stabilities for the solution of a fractional order pseudoparabolic partial differential equation.	

\vskip.5cm
\noindent
\emph{Keywords}: Pseudoparabolic fractional partial differential equation, $\psi$-Hilfer fractional partial derivative, Ulam-Hyers stability, generalized Ulam-Hyers-Rassias stability.
\newline 
MSC 2010 subject classifications. 26A33, 35R11, 35B35, 35K70.
\end{abstract}
\maketitle

\section{Introduction} 
In 1695, Leibniz formulated a question, addressed to l'Hospital, involving a possible generalization of the derivative of whole order to a derivative of order, in principle, arbitrary, and may even be complex. L'Hospital returned the question to Leibniz, questioning him in the case where the order of the derivative was middle and what a possible interpretation might be. In an audacious and prophetic response, Leibniz presents the result and states this is apparently a paradox that one day will generate several important consequences. Thus, the fractional calculus (FC) begins, and since then numerous derivative and integral formulations have been introduced 
\cite{almeida,HER,katu,KSTJ,IP,SAMKO,ZE1,ZE2}. Among the various formulations, recently Sousa and Oliveira \cite{ZE1}, introduced the so-called $\psi$-Hilfer fractional derivative of a single variable that generalizes a wide class of other formulations of fractional derivatives such as: Riemann-Liouville, Caputo, Hilfer, Riesz and other more recent, for example, generalized Caputo derivative \cite{oliveira2017}. In order to study the stability 
of solutions of partial differential equations by means of a fractional derivative, especially the $\psi$-Hilfer type, there is a need to extend the definition to $N$ variables \cite{ZE3}.

We mention a history, similar to FC, how the first idea of stability of functional equations came about. In 1940,
Ulam and Hyers exchanged correspondences on the stability study of solutions of differential equations. Since then, this theme, has been a motivator for many researchers, especially mathematicians \cite{hyers,ulam}. Subsequently, this type of stability has come to be called Ulam-Hyers stability. Study the various types of stability, be they of the type Ulam-Hyers, Ulam-Hyers-Rassias, semi-Ulam-Hyers-Rassias, Ulam-Hyers-Mittag-Leffler, $\delta$-Ulam-Hyers-Rassias \cite{estp6,estp2,est2,esto1,estp4,estp7,estp1,estp8,estp5,esto4,esto5,esto6,esto7,esto8} of solutions of 
partial and/or ordinary differential equations, by means of fractional derivatives, has been growing because of the huge quantity of papers, published during this period, justifying the importance of this particular area of mathematical analysis, in particular, regarding the FC.

Several researchers started to focus on the study of Ulam-Hyers and generalized Ulam-Hyers-Rassias stabilities, among others. However, we have mentioned that Abbas and Benchohra \cite{abbas8,abbas6,abbas7} are researchers whose study of stabilities is directed to solutions of fractional partial differential equations. Among some 
works that these authors have performed, we highlight: the study involving partial differential equations of the hyperbolic type and partial differential equations with delay in time, \cite{abbas8,abbas,abbas1,abbas5,abbas6,abbas4,abbas2,abbas7,abbas3}. In this sense, numerous studies were carried out and FC becomes a new area of application and consequently gains more space within the mathematical analysis. Other researchers such as Long et al. \cite{long}, Ahmad et al. \cite{ahmad}, Choung et al. \cite{chuong} and Zhang \cite{yan} have published works related to the stability of solutions of partial differential equations, in some cases involving fractional neutral stochastic partial integro-differential equations.

In this paper we will consider the following fractional order pseudoparabolic partial differential equation
\begin{equation}
\dfrac{\partial _{\beta ;\psi }^{3\alpha }u}{\partial _{\beta ;\psi }^{\alpha }x^{2\alpha }\partial _{\beta ;\psi }^{\alpha }y^{\alpha }}\left( x,y\right) 
=f\left( x,y,u\left( x,y\right) ,\dfrac{\partial _{\beta ;\psi }^{\alpha }u}{\partial _{\beta ;\psi }^{\alpha }y^{\alpha }}\left( x,y\right) ,
\dfrac{\partial _{\beta ;\psi }^{2\alpha }u}{\partial _{\beta
;\psi }^{\alpha }x^{2\alpha }}\left( x,y\right) \right) ,  \label{eq3}
\end{equation}%
where $\dfrac{\partial _{\beta ;\psi }^{3\alpha }u}{\partial _{\beta ;\psi }x^{2\alpha }\partial _{\beta ;\psi }y^{\alpha }}\left( \cdot ,\cdot \right)$ is the $\psi $-Hilfer fractional partial derivative \cite{ZE3} with the parameters $\dfrac{2}{3}<\alpha \leq 1$, $0\leq \beta \leq 1$ and $\text{ }0\leq x<a,\text{ }0\leq y<b$, being $f\in C\left( \left[ 0,a\right) \times \left[ 0,b\right) \times \mathbb{B}^{3},\mathbb{B}\right) $ and $(\mathbb{B},\left\vert \cdot \right\vert )$ a real or complex Banach space.

The main motivation of this paper is to present a study on the Ulam-Hyers and generalized Ulam-Hyers-Rassias stabilities of the solution of a fractional order pseudoparabolic partial differential equation. For this purpose, we use the $\psi$-Hilfer fractional derivative of $N$ variables and the Gronwall inequality, in order to contribute to the study of stabilities and provide a new and interesting result for future research.

The paper is organized as follows: In section 2, we will present the definition of the $\psi$-Riemann-Liouville fractional integral of a function relative to another function of $N$ variables and the $\psi$-Hilfer fractional partial derivative. Moreover, through the $\psi$-Hilfer fractional partial derivative, we will present a new version for the definition of Ulam-Hyers and Ulam-Hyers-Rassias stabilities, the Gronwall inequality and some remarks. In section 3, our main result, we will study the Ulam-Hyers and generalized Ulam-Hyers-Rassias stabilities of the solution of a fractional order pseudoparabolic partial differential equation. Concluding remarks close the paper.

\section{Preliminaries} 
In this section we will present the definition of $\psi$-Riemann-Liouville fractional integral and $\psi$-Hilfer fractional derivative of $N$ variables, as well as Gronwall's lemma, fundamental in the study of solutions of differential equations. In this sense, the definitions of Ulam-Hyers and generalized Ulam-Hyers-Rassias stabilities are introduced in an adapted version associated with the type of partial differential equation to be studied.

First, we present the definition of $\psi$-Riemann-Liouville fractional integral, fundamental to the $\psi$-Hilfer fractional derivative approach.

\begin{definition}{\rm \cite{ZE3}} Let $\theta =\left( \theta _{1},\theta _{2},...,\theta _{N}\right) $ and $ \alpha =\left( \alpha _{1},\alpha _{2},..., \alpha _{N}\right) $,  where $ 0<\alpha _{1},\alpha _{2},...,\alpha _{N}<1$, $N\in \mathbb{N}$. Also put $\widetilde{I}=I_{1}\times I_{2}\times  \cdot \cdot \cdot \times I_{N}=\left[\theta _{1},a_{1}\right] \times \left[ \theta _{2},a_{2}\right] \times \cdot\cdot \cdot \times \left[ \theta _{N},a_{N}\right] ,$ where $a_{1},a_{2},...,a_{N}$ and $\theta _{1},\theta _{2},...,\theta _{N}$are positive constants. Also let $\psi \left( \cdot \right) $ be an increasing and positive monotone function on $\left( \theta _{1},a_{1}\right], \left(\theta _{2},a_{2}\right] ,...,\left( \theta _{N},a_{N}\right]$, having a continuous 
derivative $\psi ^{\prime }\left( \cdot \right) $ on  $\left( \theta _{1},a_{1}\right], \left( \theta _{2},a_{2}\right] ,...,\left( \theta  _{N},a_{N}\right]$. The $\psi$-Riemann-Liouville partial integral of $N$ variables $u=\left( u_{1},u_{2},...,u_{N}\right) \in L^{1}\left( \widetilde{I }\right)$  is defined by  
\begin{equation}
I_{\theta ,x}^{\alpha ;\psi }u\left(x\right) =\frac{1}{\Gamma \left({\alpha }_{j}\right) }\int\int\cdot \cdot \cdot 
\int_{\widetilde{I}}\psi ^{\prime }\left({s}_{j}\right) \left( \psi \left( {x}_{j}\right) -\psi \left( {s}_{j}\right) \right) ^{{\alpha }_{j}-1}u\left({s}_{j}\right) d{s}_{j}, \label{eq30}
\end{equation}
with $\psi ^{\prime }\left({s}_{j}\right) \left(\psi \left({x}_{j}\right) -\psi \left({s}_{j}\right) 
\right) ^{{\alpha }_{j}-1}=\psi ^{\prime }\left( s_{1}\right) \left( \psi \left( x_{1}\right) -\psi \left( s_{1}\right) \right) ^{\alpha_{1}-1}\psi ^{\prime }\left( s_{2}\right) \left( \psi \left( x_{2}\right)-\psi \left(s_{2}\right) \right) ^{\alpha _{2}-1}\cdot \cdot \cdot \psi ^{\prime }\left(s_{N}\right) 
\left( \psi \left( x_{N}\right) -\psi \left(s_{N}\right)\right) ^{\alpha _{N}-1}$,  and using the notation $\Gamma \left( {\alpha }_{j}\right) =\Gamma \left( \alpha_{1}\right) \Gamma \left( \alpha_{2}\right) \cdot \cdot \cdot \Gamma \left(\alpha _{N}\right)$, $u\left({s}_{j}\right)=u\left( s_{1}\right) u\left( s_{2}\right) \cdot\cdot \cdot u\left(s_{N}\right) $ and $d{s}_{j}=ds_{1}ds_{2}\cdot\cdot \cdot ds_{N}$, $j\in \left\{ 1,2,...,N\right\} $ with $N\in \mathbb{N}$.
\end{definition}

From the fractional patial integral Eq.(\ref{eq30}), it is possible to obtain other fractional partial integrals, that is Erd\`elyi-Kober fractional partial integral, Katugampola fractional partial integral, Weyl fractional partial integral, among others. In addition, each fractional partial integral obtained here is an extension of its respective fractional integral \rm {\cite{HER,KSTJ,SAMKO,ZE1}.

In particular, taking $N=2$ and $\theta_{1}=\theta_{2}=0$ in \textrm{{Eq.(\ref{eq30})}} we have the fractional partial integral that will be used in what follows, 
\begin{eqnarray}
I_{\theta }^{\alpha ;\psi }u\left( x_{1},x_{2}\right) &=&\frac{1}{\Gamma \left( \alpha _{1}\right) \Gamma \left( \alpha _{2}\right) }\int_{0}^{x_{1}}\int_{0}^{x_{2}} 
\psi ^{\prime }\left( s_{1}\right)
\psi^{\prime }\left( s_{2}\right) \left( \psi \left( x_{1}\right) -\psi\left(s_{1}\right) \right) ^{\alpha _{1}-1}  \notag  \label{eq31} \\&&\left( \psi \left( x_{2}\right) 
-\psi \left( s_{2}\right) \right)^{\alpha_{2}-1}u\left( s_{1},s_{2}\right) ds_{1}ds_{2},
\end{eqnarray}
with $0<\alpha _{1},\alpha _{2}\leq 1$.

Also, we have 
\begin{equation}  \label{eq32}
I_{0+,x_{1}}^{\alpha _{1};\psi }u\left( x_{1},x_{2}\right) =\frac{1}{\Gamma \left( \alpha _{1}\right) }\int_{0}^{x_{1}}\psi ^{\prime }\left( s_{1}\right) 
\left( \psi \left( x_{1}\right) -\psi \left( s_{1}\right) \right) ^{\alpha _{1}-1}u\left( s_{1},s_{2}\right) ds_{1}
\end{equation}
and 
\begin{equation}  \label{eq33}
I_{0+,x_{2}}^{\alpha _{2};\psi }u\left( x_{1},x_{2}\right) =\frac{1}{\Gamma \left( \alpha _{2}\right) }\int_{0}^{x_{2}}\psi ^{\prime }\left( s_{2}\right) 
\left( \psi \left( x_{2}\right) -\psi \left( s_{2}\right) \right) ^{\alpha _{2}-1}u\left( s_{1},s_{2}\right) ds_{2},
\end{equation}
with $0<\alpha _{1},\alpha _{2}\leq 1$.

Using the $\psi$-Riemann-Liouville fractional partial integral, we present the $\psi$-Hilfer fractional partial derivative. The following definition is an extension of the recent fractional derivative of a variable recently introduced by Sousa and Oliveira \cite{ZE1}.

\begin{definition} {\rm \cite{ZE3}} Let $\theta =\left( \theta_{1},\theta_{2},...,\theta_{N}\right) $ and $\alpha =\left( \alpha _{1},\alpha _{2},...,\alpha _{N}\right) $, 
where $0<\alpha_{1},\alpha _{2},...,\alpha _{N}<1$, $N\in \mathbb{N}$. Also put $\widetilde{I}=I_{a_{1}}\times I_{a_{2}}\times \cdot \cdot \cdot 
\times I_{a_{N}}=\left[ \theta_{1},a_{1}\right] \times \left[ \theta_{2},a_{2} \right] \times \cdot \cdot \cdot \times \left[ \theta_{N},a_{N}\right]$,
where $a_{1},a_{2},...,a_{N}$ and $\theta_{1}, \theta_{2},..., \theta_{N}$
are positive constants. Also let $u,\psi \in C^{n}\left( \widetilde{I},  \mathbb{R} \right) $ two functions such that $\psi $ is increasing and $\psi ^{\prime } \left( x_{i}\right) \neq 0,$ $ i\in \left\{ 1,2,...,N\right\}$, $x_{i}\in \widetilde{I},$ $N\in \mathbb{N}$. The $\psi$-Hilfer fractional partial derivative of $N$ variables denoted by $^{H}\mathbb{D}_{\theta, x }^{\alpha ,\beta ;\psi }\left( \cdot \right) $ of a function, of order $\alpha $ and type $0\leq \beta _{1},\beta_{2},...,\beta _{N}\leq 1,$ is defined by  
\begin{equation}  \label{eq34}
^{H}\mathbb{D}_{\theta, {x}}^{\alpha ,\beta ;\psi }u\left({x}\right) =I_{\theta, {x}_{j}}^{\beta \left(1-\alpha \right) ;\psi } \left( \frac{1}{\psi ^{\prime }\left({x} _{j}\right) }\frac{\partial ^{N}}{\partial {x}_{j}}\right) I_{\theta ,{x}_{j}}^{\left( 1-\beta \right) \left(1-\alpha \right) ; \psi }u \left({x}_{j}\right),
\end{equation}
with $\partial{x}_{j}=\partial x_{1}\partial x_{2}\cdot \cdot \cdot \partial x_{N}$ and $\psi ^{\prime }\left({x}_{j}\right) =\psi ^{\prime }\left( x_{1}\right) \psi ^{\prime }\left( x_{2}\right) \cdot \cdot\cdot \psi ^{\prime }\left( x_{N}\right) ,$ $j\in \left\{
1,2,...,N\right\}$, $N\in \mathbb{N}$.
\end{definition}

In the same way that a large class of fractional partial integrals can be obtained, as particular cases, it is also possible for the $\psi$-Hilfer fractional partial derivative. This vast class will be omitted here, however we suggest the following papers \cite{ZE1,ZE3}.

Taking $N=2$ in \textrm{{Eq.(\ref{eq34})}}, we present the partial fractional derivative that will be used in this paper, 
\begin{equation}  \label{eq35}
^{H}\mathbb{D}_{\theta }^{\alpha ,\beta ;\psi }u\left( x_{1},x_{2}\right) =I_{\theta }^{\beta \left( 1-\alpha \right) ;\psi }\left( \frac{1}{\psi ^{\prime }
\left( x_{1}\right) \psi ^{\prime }\left( x_{2}\right) }\frac{ \partial ^{2} }{\partial x_{1}\partial x_{2}}\right) I_{\theta }^{\left(
1-\beta \right) \left( 1-\alpha \right) ;\psi }u\left( x_{1},x_{2}\right).
\end{equation}

Also, we use the following notation 
\begin{equation}  \label{eq36}
^{H}\mathbb{D}_{\theta }^{\alpha ,\beta ;\psi }u\left( x_{1},x_{2}\right) =\frac{\partial _{\beta ;\psi }^{3\alpha }u}{\partial _{\beta ;\psi}x^{2\alpha }
\partial_{\beta ;\psi }y^{\alpha }}\left( x_{1},x_{2}\right).
\end{equation}

We remark that, the following Gronwall lemma is an important tool in proving the main results of this paper.

\begin{lemma}\label{l1}{\rm\cite{gronwall}} One assumes that
\begin{enumerate}
\item  $u,v,h\in C\left( [a,b],\mathbb{R}_{+}\right)$;
\item For any $t\geq a$ and $\psi \left( t\right) $ is increasing and $\psi ^{\prime }\left( t\right) $ for all $t\in [a,b]$ one has
\begin{equation*}
u\left( t\right) \leq v\left( t\right) +h\left( t\right) \int_{a}^{t}\psi ^{\prime }\left( s\right) \left( \psi \left( t\right) -\psi \left( s\right) 
\right) ^{\alpha -1}u\left( s\right) ds,
\end{equation*}
\item $h\left( t\right) $ is nonnegative and nondecreasing.
\end{enumerate}

Then, we have
\begin{equation*}
u\left( t\right) \leq v\left( t\right) \mathbb{E}_{\alpha }\left[ h\left( t\right) \Gamma \left( \alpha \right) \left( \psi \left( t\right) -\psi 
\left( a\right) \right) ^{\alpha }\right],
\end{equation*}
$\text{ for any }t\geq a$ and being $\mathbb{E}_{\alpha}(\cdot)$ the one-parameter Mittag-Leffler function.
\end{lemma}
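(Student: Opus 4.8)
The plan is to prove the inequality by the classical Picard-type iteration, adapted to the fractional $\psi$-kernel. First I would introduce the integral operator
\begin{equation*}
(B\phi)(t) = h(t)\int_a^t \psi'(s)\left(\psi(t)-\psi(s)\right)^{\alpha-1}\phi(s)\,ds,
\end{equation*}
so that hypothesis (2) reads simply $u(t)\le v(t)+(Bu)(t)$. Since $B$ maps nonnegative functions to nonnegative functions, it is monotone, and substituting the inequality into itself $n$ times yields
\begin{equation*}
u(t) \le \sum_{k=0}^{n-1}(B^k v)(t) + (B^n u)(t).
\end{equation*}
Everything then reduces to estimating the iterated kernels $(B^k v)(t)$ and showing that the remainder $(B^n u)(t)$ vanishes as $n\to\infty$.

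The heart of the argument is the inductive claim
\begin{equation*}
(B^k v)(t) \le v(t)\,\frac{\left[h(t)\Gamma(\alpha)\right]^k\left(\psi(t)-\psi(a)\right)^{\alpha k}}{\Gamma(\alpha k+1)},\qquad k\ge 0.
\end{equation*}
To pass from $k$ to $k+1$ I would use the monotonicity of $h$ and of $v$ (so that $h(s)\le h(t)$ and $v(s)\le v(t)$ for $s\le t$) to pull the factors $h(t)$ and $v(t)$ outside the integral, and then evaluate the remaining integral by the substitution $w=\psi(s)$. Writing $w=\psi(a)+\tau\left(\psi(t)-\psi(a)\right)$ turns it into a Beta integral,
\begin{equation*}
\int_a^t \psi'(s)\left(\psi(t)-\psi(s)\right)^{\alpha-1}\left(\psi(s)-\psi(a)\right)^{\alpha k}\,ds = \left(\psi(t)-\psi(a)\right)^{\alpha(k+1)}\frac{\Gamma(\alpha)\Gamma(\alpha k+1)}{\Gamma(\alpha(k+1)+1)},
\end{equation*}
and the factor $\Gamma(\alpha k+1)$ cancels against the inductive hypothesis, reproducing the claim with $k+1$. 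I expect this Beta-function computation, together with the bookkeeping of the Gamma factors, to be the main technical obstacle; the role of the $\psi$-kernel is precisely to reduce it, after the change of variables, to the classical power case.

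Finally I would control the remainder: since $u$ is continuous on the compact interval $[a,b]$ it is bounded by some $M$, and applying the same induction with the constant majorant $M$ in place of $v$ gives
\begin{equation*}
(B^n u)(t) \le M\,\frac{\left[h(b)\Gamma(\alpha)\right]^n\left(\psi(b)-\psi(a)\right)^{\alpha n}}{\Gamma(\alpha n+1)}\xrightarrow[n\to\infty]{}0,
\end{equation*}
the convergence following from the super-exponential growth of $\Gamma(\alpha n+1)$. Letting $n\to\infty$ in the iterated inequality and summing the series then gives
\begin{equation*}
u(t) \le v(t)\sum_{k=0}^{\infty}\frac{\left[h(t)\Gamma(\alpha)\left(\psi(t)-\psi(a)\right)^{\alpha}\right]^k}{\Gamma(\alpha k+1)} = v(t)\,\mathbb{E}_{\alpha}\!\left[h(t)\Gamma(\alpha)\left(\psi(t)-\psi(a)\right)^{\alpha}\right],
\end{equation*}
by the series definition $\mathbb{E}_\alpha(z)=\sum_{k\ge 0}z^k/\Gamma(\alpha k+1)$ of the one-parameter Mittag-Leffler function, which is exactly the claimed bound.
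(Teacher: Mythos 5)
The paper offers no proof of this lemma at all: it is imported verbatim from the reference \cite{gronwall}, so there is no internal argument to compare against. Your proposal is, in substance, the standard proof of this $\psi$-fractional Gronwall inequality as given in the cited literature: introduce the monotone operator $B$, iterate the hypothesis to get $u\le\sum_{k<n}B^k v+B^n u$, bound the iterated kernels by induction via the Beta integral
\begin{equation*}
\int_a^t \psi'(s)\bigl(\psi(t)-\psi(s)\bigr)^{\alpha-1}\bigl(\psi(s)-\psi(a)\bigr)^{\alpha k}\,ds
=\bigl(\psi(t)-\psi(a)\bigr)^{\alpha(k+1)}\frac{\Gamma(\alpha)\Gamma(\alpha k+1)}{\Gamma(\alpha(k+1)+1)},
\end{equation*}
kill the remainder using the growth of $\Gamma(\alpha n+1)$, and sum the series into $\mathbb{E}_\alpha$. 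The change of variables, the Gamma bookkeeping, and the remainder estimate are all correct.

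One point, however, deserves explicit attention. In your inductive step you pull $v(s)\le v(t)$ out of the integral, i.e.\ you use that $v$ is nondecreasing --- but that is \emph{not} among the hypotheses (1)--(3) as stated. This is a defect you inherited rather than created: the lemma as reproduced in this paper drops the monotonicity assumption on $v$ that is present in the original source, and without it the stated conclusion is actually false. Indeed, take $h\equiv 1$ and a continuous $v\ge 0$ with $v>0$ on $[a,t_0)$ and $v(t_0)=0$; the function $u(t)=v(t)+\int_a^t\psi'(s)(\psi(t)-\psi(s))^{\alpha-1}v(s)\,ds$ satisfies hypothesis (2), yet $u(t_0)>0$ while the claimed bound forces $u(t_0)\le v(t_0)\,\mathbb{E}_\alpha[\cdots]=0$. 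If you want your proof to make the role of this assumption transparent, follow the route of the original reference: first establish, with no assumption on $v$ beyond nonnegativity, the intermediate bound
\begin{equation*}
u(t)\le v(t)+\int_a^t\sum_{k=1}^{\infty}\frac{\bigl[h(t)\Gamma(\alpha)\bigr]^k}{\Gamma(\alpha k)}\,\psi'(s)\bigl(\psi(t)-\psi(s)\bigr)^{\alpha k-1}v(s)\,ds,
\end{equation*}
by keeping $v(s)$ inside the iterated integrals (the induction then uses only the monotonicity of $h$), and only at the last step invoke $v(s)\le v(t)$ to evaluate the integrals and obtain $v(t)\,\mathbb{E}_\alpha\bigl[h(t)\Gamma(\alpha)(\psi(t)-\psi(a))^{\alpha}\bigr]$. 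That isolates exactly where monotonicity of $v$ is needed; with that hypothesis restored, your argument is complete.
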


To facilitate the development of the calculations, we introduce the following notation:
\begin{enumerate}
\item $\mathcal{W}_{v_{1},v_{2}}^{p,v}f\left( x,y\right) :=\displaystyle\int_{0}^{p}f\left( x,y,v\left(
x,y\right) ,v_{1}\left( x,y\right) ,v_{2}\left( x,y\right) \right) dp$;

\item $\mathcal{W}_{u_{1},u_{2}}^{p,u}f\left( x,y\right) :=\displaystyle\int_{0}^{p}f\left( x,y,u\left(
x,y\right) ,u_{1}\left( x,y\right) ,u_{2}\left( x,y\right) \right) dp$;

\item $\widetilde{\mathcal{W}}_{v,u}^{p}f\left( x,y\right) :=\displaystyle\int_{0}^{p}\left\vert 
\begin{array}{c}
f\left( x,y,v\left( x,y\right) ,v_{1}\left( x,y\right) ,v_{2}\left( x,y\right)\right) \\-f\left( x,y,u\left( x,y\right) ,u_{1}\left( x,y\right) ,u_{2}
\left(x,y\right) \right) 
\end{array}
\right\vert dp$

\item $\mathcal{W}^{p}\varphi \left( x,y\right) :=\displaystyle\int_{0}^{p}\varphi \left( x,y\right) dp$;

\item $\Psi ^{\gamma }\left( x,0\right) :=\dfrac{\left( \psi \left( x\right) -\psi\left( 0\right) \right) ^{\gamma -1}}{\Gamma \left( \gamma \right) }$;

\item $\Psi ^{\gamma }\left( 0,y\right) :=\dfrac{\left( \psi \left( y\right) -\psi \left( 0\right) \right) ^{\gamma -1}}{\Gamma \left( \gamma \right) }$;

\end{enumerate}

All of the above items exist and are well defined.

Let $a,b\in \left( 0,\infty \right] ,$ $\varepsilon >0,$ $\varphi \in C\left( \left[ 0,a\right) \times \left[ 0,b\right), \mathbb{R}_{+}\right)$ and $(\mathbb{B}, 
\left\vert \cdot\right\vert)$ be a real or complex Banach
space.

We consider the following inequalities
\begin{equation}
\left\vert \dfrac{\partial _{\beta ;\psi }^{3\alpha }v}{\partial _{\beta ;\psi }^{\alpha }x^{2\alpha }\partial _{\beta ;\psi }^{\alpha }y^{\alpha }}
\left( x,y\right) -f\left( x,y,v\left( x,y\right) ,\frac{\partial _{\beta ;\psi }^{\alpha }v}{\partial _{\beta ;\psi }^{\alpha }y^{\alpha }}\left(
x,y\right) ,\frac{\partial _{\beta ;\psi }^{2\alpha }v}{\partial _{\beta ;\psi }^{\alpha }x^{2\alpha }}\left( x,y\right) \right) \right\vert \leq \varepsilon ,  \label{eq4}
\end{equation}%
$x\in \left[ 0,a\right) ,$ $y\in \left[ 0,b\right) $; 
\begin{equation}
\left\vert \frac{\partial _{\beta ;\psi }^{3\alpha }v}{\partial _{\beta ;\psi }^{\alpha }x^{2\alpha }\partial _{\beta ;\psi }^{\alpha }y^{\alpha }}
\left( x,y\right) -f\left( x,y,v\left( x,y\right) ,\frac{\partial _{\beta;\psi }^{\alpha }v}{\partial _{\beta ;\psi }^{\alpha }y^{\alpha }}\left(
x,y\right) ,\frac{\partial _{\beta ;\psi }^{2\alpha }v}{\partial _{\beta;\psi }^{\alpha }x^{2\alpha }}\left( x,y\right) \right) \right\vert \leq\varphi 
\left( x,y\right) ,  \label{eq5}
\end{equation}%
$x\in \left[ 0,a\right) ,$ $y\in \left[ 0,b\right)$, with $\dfrac{2}{3}<\alpha \leq 1$ and $0\leq \beta \leq 1$.

Note that, a function $u:\left[ 0,a\right) \times \left[ 0,b\right) \rightarrow \mathbb{B}$ is a solution of \textrm{{Eq.(\ref{eq3})}} if 
$u\in C\left( \left[ 0,a\right) \times \left[ 0,b\right) \right) \cap C^{1}\left( \left[ 0,a\right) \times \left[ 0,b\right) \right) ,$ 
$\dfrac{\partial_{\beta ;\psi }^{2\alpha }}{\partial _{\beta ;\psi }^{\alpha }x^{2\alpha }} \in C\left( \left[ 0,a\right) \times \left[ 0,b\right) 
\right) $, $\dfrac{\partial _{\beta ;\psi }^{3\alpha }}{\partial _{\beta ;\psi }^{\alpha
}x^{2\alpha }\partial _{\beta ;\psi }^{\alpha }y^{\alpha }}\in C\left( \left[ 0,a\right) \times \left[ 0,b\right) \right) $ and $u$ satisfies the \textrm{{Eq.(\ref{eq3})}}.

The following definitions were adapted for the $\psi$-Hilfer fractional derivative for two variables and we used in \cite{lungu,ZE3}.

\begin{definition}
The solution of {\rm{{Eq.(\ref{eq3})}}} is Ulam-Hyers stability if there exist real numbers $C_{f}^{1},C_{f}^{2}$ and $C_{f}^{3}>0$ such that for any $ \varepsilon >0$ 
and for any solution $v$ to the inequality \rm{{Eq.(\ref{eq4})}} with 
\begin{equation*}
\left\vert v\left( x,y\right) -u\left( x,y\right) \right\vert \leq C_{f}^{1}\varepsilon ,\text{ }
\end{equation*}%
\begin{equation*} 
\left\vert \dfrac{\partial _{\beta ;\psi }^{\alpha }v}{\partial _{\beta ;\psi }^{\alpha }y^{\alpha }}\left( x,y\right) -
\dfrac{\partial _{\beta ;\psi }^{\alpha }u}{\partial _{\beta ;\psi }^{\alpha }y^{\alpha }}\left(x,y\right) \right\vert \leq C_{f}^{2}\varepsilon ,
\end{equation*}
\begin{equation*}
\left\vert \dfrac{\partial _{\beta ;\psi }^{2\alpha }v}{\partial _{\beta ;\psi }^{\alpha }x^{2\alpha }}\left( x,y\right) -
\dfrac{\partial _{\beta ;\psi }^{2\alpha }u}{\partial _{\beta ;\psi }^{\alpha }x^{2\alpha }}\left( x,y\right) \right\vert \leq C_{f}^{3}\varepsilon,
\end{equation*}
$x\in \left[ 0,a\right) ,$ $y\in \left[ 0,b\right) $ with $\dfrac{2}{3}<\alpha \leq 1$ and $0\leq \beta \leq 1$.
\end{definition}

\begin{definition}
The solution of {\rm{{Eq.(\ref{eq3})}}} admits generalized Ulam-Hyers-Rassias stability if there exist real numbers 
$C_{f,\varphi }^{1},C_{f,\varphi }^{2}$ and $C_{f,\varphi }^{3}>0$ such that for any $
\varepsilon >0$ and for any solution $v$ to the inequality {\rm{{Eq.(\ref{eq5})}}} with 
\begin{equation*}
\left\vert v\left( x,y\right) -u\left( x,y\right) \right\vert \leq C_{f,\varphi }^{1}\varphi \left( x,y\right) ,\text{ }
\end{equation*}
\begin{equation*}
\left\vert \frac{\partial _{\beta ;\psi }^{\alpha }v}{\partial _{\beta ;\psi }^{\alpha }y^{\alpha }}\left( x,y\right) -
\frac{\partial _{\beta ;\psi }^{\alpha }u}{\partial _{\beta ;\psi }^{\alpha }y^{\alpha }}\left( x,y\right) \right\vert \leq C_{f,\varphi }^{2}\varphi \left( x,y\right) ,
\end{equation*}

\begin{equation*}
\left\vert \frac{\partial _{\beta ;\psi }^{2\alpha }v}{\partial _{\beta ;\psi }^{\alpha }x^{2\alpha }}\left( x,y\right) -
\frac{\partial _{\beta ;\psi }^{2\alpha }u}{\partial _{\beta ;\psi }^{\alpha }x^{2\alpha }}\left( x,y\right) \right\vert \leq C_{f,\varphi }^{3}\varphi \left( x,y\right) ,
\end{equation*}
$x\in \left[ 0,a\right) ,$ $y\in \left[ 0,b\right) $ with $\dfrac{2}{3}<\alpha \leq 1$ and $0\leq \beta \leq 1$.
\end{definition}

\begin{remark}\label{re1} A function $v$ is a solution to the inequality {\rm{{Eq.(\ref{eq4})}}} if, and only if, there exists a 
function $g\in C\left( \left[0,a\right)\times \left[ 0,b\right), \mathbb{B}\right)$, which depends on $v$, such that 

\begin{enumerate}
\item For all $\varepsilon >0$, $\left\vert g\left( x,y\right) \right\vert \leq \varepsilon $, $\forall x\in \left[ 0,a\right) $, $\forall y\in \left[ 0,b\right) $;

\item $\forall x\in \left[ 0,a\right) $, $\forall y\in \left[ 0,b\right) $, 
\begin{equation*}
\frac{\partial _{\beta ;\psi }^{3\alpha }v}{\partial _{\beta ;\psi }^{\alpha }x^{2\alpha }\partial _{\beta ;\psi }^{\alpha }y^{\alpha }}
\left( x,y\right) =f\left( x,y,v\left( x,y\right) ,\frac{\partial _{\beta ;\psi }^{\alpha }v}{\partial _{\beta ;\psi }^{\alpha }y^{\alpha }}\left( x,y\right) ,\frac{%
\partial _{\beta ;\psi }^{2\alpha }v}{\partial _{\beta ;\psi }^{\alpha}x^{2\alpha }}\left( x,y\right) \right) +g\left( x,y\right) 
\end{equation*}
with $\dfrac{2}{3}<\alpha \leq 1$ and $0\leq \beta \leq 1$. 
\end{enumerate}
\end{remark}

\begin{remark}\label{re2} A function $v$ is a solution to the inequality {\rm{{Eq.(\ref{eq5})}}} if, and only if, there exists a function 
$g\in C\left( \left[ 0,a\right) \times \left[ 0,b\right), \mathbb{B}\right)$, which depends on $v$, such that 

\begin{enumerate}
\item $\left\vert g\left( x,y\right) \right\vert \leq \varphi \left(x,y\right)$, $\forall x\in \left[ 0,a\right) $, $\forall y\in \left[ 0,b\right) $;

\item $\forall x\in \left[ 0,a\right) $, $\forall y\in \left[ 0,b\right) $, 
\begin{equation*}
\frac{\partial _{\beta ;\psi }^{3\alpha }v}{\partial _{\beta ;\psi }^{\alpha }x^{2\alpha }\partial _{\beta ;\psi }^{\alpha }y^{\alpha }}\left( x,y\right) =f
\left( x,y,v\left( x,y\right) ,\frac{\partial _{\beta ;\psi }^{\alpha }v}{\partial _{\beta ;\psi }^{\alpha }y^{\alpha }}\left( x,y\right) ,\frac{%
\partial _{\beta ;\psi }^{2\alpha }v}{\partial _{\beta ;\psi }^{\alpha}x^{2\alpha }}\left( x,y\right) \right) +g\left( x,y\right) 
\end{equation*}
with $\dfrac{2}{3}<\alpha \leq 1$ and $0\leq \beta \leq 1$ .
\end{enumerate}
\end{remark}

\begin{remark}\label{re3} If $v$ is a solution to the inequality {\rm{{Eq.(\ref{eq4})}}}, then $\left( v,v_{1},v_{2}\right) $, is a solution of the 
following system of integral inequalities:
\begin{eqnarray*}
&&\left\vert 
\begin{array}{c}
v\left( x,y\right) -\Psi^{\gamma }\left( 0,y\right)v\left( x,0\right) - \Psi^{\gamma }\left( x,0\right) v\left( 0,y\right) -\Psi^{\gamma }\left( x,0\right) 
xv_{x}\left( 0,y\right)\\ - I_{\theta}^{\alpha ;\psi }\displaystyle\left( \mathcal{W}_{v_{1},v_{2}}^{p,v}f\left( x,y\right)\right) 
\end{array}%
\right\vert  \\
&\leq &\varepsilon x\dfrac{\left( \psi \left( x\right) -\psi \left( 0\right) \right) ^{\alpha _{1}}\left( \psi \left( y\right) -\psi \left( 0\right) 
\right) ^{\alpha _{2}}}{\Gamma \left( \alpha _{1}+1\right) \Gamma \left(\alpha _{2}+1\right) };
\end{eqnarray*}%
\begin{eqnarray*}
&&\left\vert 
\begin{array}{c}
v_{1}\left( x,y\right) -\Psi^{\gamma }\left( x,0\right) v_{1}\left( 0,y\right) - \Psi^{\gamma }\left( x,0\right)xv_{1x}\left( 0,y\right) -
I_{0+,x}^{\alpha _{1};\psi }\displaystyle\left( \mathcal{W}_{v_{1},v_{2}}^{p,v}f\left( x,y\right)\right) 
\end{array}%
\right\vert  \\
&\leq &\varepsilon x\frac{\left( \psi \left( x\right) -\psi \left( 0\right)
\right) ^{\alpha _{1}}}{\Gamma \left( \alpha _{1}+1\right) };
\end{eqnarray*}%
\begin{equation*}
\left\vert 
\begin{array}{c}
v_{2}\left( x,y\right) -\Psi^{\gamma }\left( x,0\right) v_{2}\left(0,y\right) -I_{0+,y}^{\alpha _{2};\psi }\displaystyle
\left( \mathcal{W}_{v_{1},v_{2}}^{p,v}f\left( x,y\right)\right) 
\end{array}%
\right\vert \leq \varepsilon \frac{\left( \psi \left( y\right) -\psi \left(
0\right) \right) ^{\alpha _{2}}}{\Gamma \left( \alpha _{2}+1\right) };
\end{equation*}%
$ x\in \left[ 0,a\right) ,$ $y\in \left[ 0,b\right) $, $%
\dfrac{2}{3}<\alpha \leq 1$, $0\leq \gamma \leq 1$, $v_{1}=\dfrac{\partial
_{\beta ;\psi }^{\alpha }v}{\partial _{\beta ,;\psi }y^{\alpha }}$ and $%
v_{2}=\dfrac{\partial _{\beta ;\psi }^{2\alpha }v}{\partial _{\beta ,;\psi
}x^{2\alpha }}$.
\end{remark}

\begin{proof} From \rm{{Eq.(\ref{eq5})}} we have,
\begin{equation*}
\begin{array}{c}
v\left( x,y\right) -\Psi^{\gamma }\left( 0,y\right) v\left( x,0\right) + \Psi^{\gamma }\left( x,0\right) v\left( 0,y\right) +\Psi^{\gamma }
\left( x,0\right) xv_{x}\left( 0,y\right)-I_{\theta }^{\alpha ;\psi }\displaystyle\left(\mathcal{W}_{v_{1},v_{2}}^{p,v}f\left( x,y\right)
\right)\\+I_{\theta }^{\alpha ;\psi }\displaystyle\left( \mathcal{W}^{p}\varphi \left( x,y\right)\right) .
\end{array}
\end{equation*}

Then, we can write
\begin{eqnarray*}
&&\left\vert 
\begin{array}{c}
v\left( x,y\right) -\Psi^{\gamma }\left( 0,y\right)v\left( x,0\right) - \Psi^{\gamma }\left( x,0\right)v\left( 0,y\right) -\Psi^{\gamma }
\left( x,0\right)xv_{x}\left( 0,y\right)\\ -I_{\theta }^{\alpha ;\psi }\displaystyle\left( \mathcal{W}_{v_{1},v_{2}}^{p,v}f\left( x,y\right)\right) 
\end{array}%
\right\vert   \notag \\
&\leq &\left\vert I_{\theta }^{\alpha ;\psi }\displaystyle\left(\mathcal{W}^{p}\varphi \left( x,y\right)\right) \right\vert \leq I_{\theta }^{\alpha ;\psi }
\displaystyle\left( \int_{0}^{p}\varepsilon dp\right)  =\varepsilon x\dfrac{\left( \psi \left( x\right) -\psi \left( 0\right)
\right) ^{\alpha _{1}}\left( \psi \left( y\right) -\psi \left( 0\right)
\right) ^{\alpha _{2}}}{\Gamma \left( \alpha _{1}+1\right) \Gamma \left(
\alpha _{2}+1\right) }.
\end{eqnarray*}

In this sense, we have the inequalities 
\begin{eqnarray*}
&&\left\vert 
\begin{array}{c}
v_{1}\left( x,y\right) -\Psi^{\gamma }\left( x,0\right) v_{1}\left( 0,y\right) -\Psi^{\gamma }\left( x,0\right) xv_{x}\left( 0,y\right) -I_{0+;x}^{\alpha_{1} ;\psi }
\displaystyle\left( \mathcal{W}_{v_{1},v_{2}}^{p,v}f\left( x,y\right)\right) 
\end{array}%
\right\vert   \notag \\
&\leq &\varepsilon x\dfrac{\left( \psi \left( x\right) -\psi \left( 0\right) \right) ^{\alpha _{1}}}{\Gamma \left( \alpha _{1}+1\right) }
\end{eqnarray*}
and
\begin{eqnarray*}
&&\left\vert v_{2}\left( x,y\right) -\Psi^{\gamma }\left( 0,y\right) v_{2}\left( x,0\right) -I_{0+;y}^{\alpha_{2} ;\psi }f\left(x,y,v(x,y),v_{1}(x,y),v_{2}(x,y) 
\right) \right\vert   \notag \\
&\leq &\varepsilon \dfrac{\left( \psi \left( y\right) -\psi \left( 0\right) \right) ^{\alpha _{2}}}{\Gamma \left( \alpha _{2}+1\right) }.
\end{eqnarray*}
\end{proof}

\begin{remark}\label{re4} If $v$ is a solution of the {\rm{{Eq.(\ref{eq5})}}}, then $\left( v,v_{1},v_{2}\right)$ is a solution of the following system of 
integral inequalities: 
\begin{eqnarray*}
&&\left\vert 
\begin{array}{c}
v\left( x,y\right) -\Psi^{\gamma }\left( 0,y\right) v\left( x,0\right) - \Psi^{\gamma }\left( x,0\right)v\left( 0,y\right) - \Psi^{\gamma }\left( x,0\right) 
xv_{x}\left( 0,y\right) -I_{\theta }^{\alpha;\psi }\displaystyle\left(\mathcal{W}_{v_{1},v_{2}}^{p,v}f\left( x,y\right)\right) 
\end{array}
\right\vert   \notag \\
&\leq &I_{\theta }^{\alpha ;\psi }\left(\mathcal{W}^{p}\varphi \left( x,y\right)\right) ;
\end{eqnarray*}
\begin{eqnarray*}
&&\left\vert 
\begin{array}{c}
v_{1}\left( x,y\right) -\Psi^{\gamma }\left( x,0\right) v_{1}\left( 0,y\right) - \Psi^{\gamma }\left( x,0\right) xv_{1x}\left( 0,y\right)-I_{0+;x}^{\alpha_{1} ;
\psi }\displaystyle\left( \mathcal{W}_{v_{1},v_{2}}^{p,v}f\left( x,y\right)\right)
\end{array}%
\right\vert   \notag \\
&\leq &I_{0+;x}^{\alpha_{1} ;\psi }\left( \mathcal{W}^{p}\varphi \left( x,y\right)\right) ;
\end{eqnarray*}
\begin{eqnarray*}
&&\left\vert v_{2}\left( x,y\right) -\Psi^{\gamma }\left( 0,y\right) v_{2}\left( x,0\right) -I_{0+;y}^{\alpha_{2} ;\psi }f\left( x,y,v\left(x,y\right) ,v_{1}
\left( x,y\right) ,v_{2}\left( x,y\right) \right)\right\vert   \notag \\
&\leq &I_{0+;y}^{\alpha_{2} ;\psi }\left( \mathcal{W}^{p}\varphi \left( x,y\right)\right) ;
\end{eqnarray*}
$ x\in \left[ 0,a\right) ,$ $y\in \left[ 0,b\right) $, $\dfrac{2}{3}<\alpha \leq 1$, $0\leq \gamma \leq 1$, $v_{1}=
\dfrac{\partial _{\beta;\psi }^{\alpha }v}{\partial _{\beta ,;\psi }y^{\alpha }}$ and $v_{2}=
\dfrac{\partial _{\beta ;\psi }^{2\alpha }v}{\partial _{\beta ,;\psi }x^{2\alpha }}$.
\end{remark}

\section{Main results}

In this section we present the main results obtained in this paper, the Ulam-Hyers and generalized Ulam-Hyers-Rassias stabilities for the solution of the 
fractional partial differential equation of the pseudoparabolic type Eq.(\ref{eq3}), as well as the uniqueness of solutions. This section will be divided into 
two sub-sections, for better development and understanding of results.

\subsection{Ulam-Hyers Stability}

Now, we consider Ulam-Hyers stability of the \rm{{Eq.(\ref{eq3})}}. For this problem we have the following result. 
\begin{theorem} We suppose that 
\begin{enumerate}
\item $a<\infty $, $b<\infty $ 
\item $f\in C\left( \left[ 0,a\right) \times \left[ 0,b\right)\times \mathbb{B}^{3}, \mathbb{B}\right) $ 
\item $\exists L_{f}>0$ such that 
\begin{equation}
\left\vert f\left( x,y,u_{1},u_{2},u_{3}\right) -f\left( x,y,v_{1},v_{2},v_{3}\right) \right\vert \leq L_{f}\underset{i\in \{1,2,3 \} }{\max } 
\left\vert u_{i}-v_{i}\right\vert ;
\end{equation}
for all $x\in \left[ 0,a\right]$, $y\in \left[ 0,b\right] $ and $u_{1},u_{2},u_{3},v_{1},v_{2},v_{3}\in \mathbb{B}$.
\end{enumerate}

Then, we have
$\left( a\right) $ For $h\in C^{2}\left( \left[ 0,a\right], \mathbb{B}\right)$, $g_{1},g_{2}\in C^{1}\left( \left[ 0,b\right], \mathbb{B}\right) $ 
the {\rm{{Eq.(\ref{eq3})}}} has a unique solution with
\begin{equation}\label{k12}
\left\{ 
\begin{array}{ccc}
I_{\theta }^{1-\gamma ;\psi }u\left( x,0\right) & = & h\left( x\right), \text{ }x\in \left[ 0,a\right] \\ 
I_{\theta }^{1-\gamma ;\psi }u\left( 0,y\right) & = & g_{1}\left( y\right), \text{ }y\in \left[ 0,b\right] \\ 
u_{x}\left( 0,y\right) & = & g_{2}\left( y\right) ,\text{ }y\in \left[ 0,b\right]
\end{array}
\right.
\end{equation}
$\left( b\right) $ The {\rm{{Eq.(\ref{eq3})}}} is Ulam-Hyers stable.
\end{theorem}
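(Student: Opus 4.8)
The plan is to reduce the boundary-value problem to an equivalent system of fractional integral equations and then to run a Banach fixed-point argument for part $(a)$ and a Gronwall estimate for part $(b)$.

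For the existence and uniqueness in $(a)$, I would apply the two-directional operator $I_{\theta}^{\alpha;\psi}$, together with the one-directional operators $I_{0+,x}^{\alpha_{1};\psi}$ and $I_{0+,y}^{\alpha_{2};\psi}$, to both sides of Eq.(\ref{eq3}), so that the boundary data $h,g_{1},g_{2}$ enter through the free terms $\Psi^{\gamma}(0,y)u(x,0)$, $\Psi^{\gamma}(x,0)v(0,y)$ and $\Psi^{\gamma}(x,0)xu_{x}(0,y)$ already appearing in Remark \ref{re3} (these traces are determined by the conditions in Eq.(\ref{k12}), hence are known inhomogeneous terms). This recasts the problem as the fixed-point equation $(u,u_{1},u_{2})=T(u,u_{1},u_{2})$ on the Banach space $X:=C([0,a]\times[0,b],\mathbb{B})^{3}$, where $T$ stacks the three right-hand sides, each of the form ``free term'' plus $I_{\theta}^{\alpha;\psi}\bigl(\mathcal{W}_{u_{1},u_{2}}^{p,u}f(x,y)\bigr)$ or its directional analogue. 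Using hypothesis $(3)$, the Lipschitz bound gives $\|T\xi-T\bar\xi\|\le L_{f}K\|\xi-\bar\xi\|$ with $K$ an explicit constant controlled by the fractional integrals of $1$ over the compact rectangle. To avoid any smallness restriction on $L_{f}$, I would equip $X$ with a Bielecki-type weighted norm $\|\xi\|_{\lambda}=\sup e^{-\lambda(\psi(x)+\psi(y))}\|\xi(x,y)\|$; choosing $\lambda$ large makes $T$ a contraction, and the Banach fixed-point theorem yields the unique solution.

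For the Ulam-Hyers stability in $(b)$, let $v$ be any solution of the inequality Eq.(\ref{eq4}) and let $u$ be the exact solution produced in $(a)$. By Remark \ref{re3}, the triple $(v,v_{1},v_{2})$ satisfies the system of integral inequalities with right-hand sides of order $\varepsilon$, while $(u,u_{1},u_{2})$ satisfies the corresponding integral equations exactly and with the same boundary data. Subtracting and using the triangle inequality together with hypothesis $(3)$, I would set $w(x,y):=\max\{|v-u|,|v_{1}-u_{1}|,|v_{2}-u_{2}|\}$ and obtain an estimate of the schematic form $w(x,y)\le \varepsilon\,C(a,b)+L_{f}\,I_{\theta}^{\alpha;\psi}\bigl(\mathcal{W}^{p}w(x,y)\bigr)$, that is, $w$ bounded by an $\varepsilon$-term plus a fractional integral of itself.

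Finally, I would apply the Gronwall inequality, Lemma \ref{l1}, to this integral inequality. Since $a,b<\infty$, the Mittag-Leffler factor $\mathbb{E}_{\alpha}\bigl[L_{f}\Gamma(\alpha)(\psi(\cdot)-\psi(0))^{\alpha}\bigr]$ is bounded on the compact domain, so $w(x,y)\le \varepsilon\,C_{f}$ for an explicit constant $C_{f}$; reading off the three components yields the required constants $C_{f}^{1},C_{f}^{2},C_{f}^{3}$ and proves Ulam-Hyers stability. The main obstacle I anticipate is precisely this Gronwall step: Lemma \ref{l1} is stated for a single variable with one fractional integral, whereas the estimate for $w$ lives on the rectangle $[0,a)\times[0,b)$ and involves the two-directional operator $I_{\theta}^{\alpha;\psi}$. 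I would address this either by freezing one variable and iterating the one-dimensional inequality in each coordinate, or by first collapsing the double integral into a single one using the monotonicity of $\psi$ and the boundedness of the remaining directional factor on the compact domain, so that Lemma \ref{l1} applies directly.
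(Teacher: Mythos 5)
Your proposal follows essentially the same route as the paper: part $(a)$ is proved there by the same reduction to the integral system Eq.(\ref{k13}) followed by a contraction argument in a Bielecki norm (the paper weights by $\exp(-\delta(x+y))$, you by $\exp(-\lambda(\psi(x)+\psi(y)))$, an immaterial difference), and part $(b)$ by choosing $u$ to be the solution sharing the boundary traces of $v$, invoking Remark \ref{re3}, the Lipschitz hypothesis, and then Gronwall. The one point worth recording is that the obstacle you flag at the Gronwall step is genuine and is not addressed by the paper either: Lemma \ref{l1} is one-dimensional, yet the paper applies it directly to a two-variable inequality in which, moreover, the Lipschitz term $\max_{i}\left\vert v_{i}-u_{i}\right\vert$ couples the three differences, whose respective inequalities involve the three different operators $I_{\theta}^{\alpha;\psi}$, $I_{0+,x}^{\alpha_{1};\psi}$ and $I_{0+,y}^{\alpha_{2};\psi}$, and the paper then estimates each difference separately as if the system were decoupled. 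Your sup-in-one-variable collapse does repair the inequality for $\left\vert v-u\right\vert$, but for the coupled system the mixed directions prevent a single application of Lemma \ref{l1}; what is actually needed is an iteration of the system of inequalities (successive substitution, using that compositions of the three operators gain factorial, Mittag-Leffler-type decay on the compact rectangle), so on this step your plan is, if anything, more careful than the paper's own proof.
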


\begin{proof}
(a) If $u\left( x,y\right) $ is a solution to the problem \rm{{Eq.(\ref{eq3})}} and \rm{{Eq.(\ref{k12})}}, then 
$\left( u,\dfrac{\partial _{\beta ;\psi }^{\alpha }u}{\partial _{\beta ;\psi }y^{\alpha }},\dfrac{\partial _{\beta ;\psi }^{2\alpha }u}{\partial
_{\beta ;\psi }x^{2\alpha }}\right) $ is a solution to the system:
\begin{equation}\label{k13}
\left\{ 
\begin{array}{cll}
u\left( x,y\right)  & = & \Psi ^{\gamma }\left( 0,y\right) h\left( x\right)
+\Psi ^{\gamma }\left( x,0\right) g_{1}\left( y\right) +\Psi ^{\gamma
}\left( x,0\right) xg_{2}\left( y\right) +I_{\theta }^{\alpha ;\psi }\left( 
\mathcal{W}_{u_{1},u_{2}}^{p,u}f\left( x,y\right) \right)  \\ 
u_{1}\left( x,y\right)  & = & \Psi ^{\gamma }\left( x,0\right) g_{1y^{\alpha
}}\left( y\right) +\Psi ^{\gamma }\left( x,0\right) xg_{2y^{\alpha }}\left(
y\right) +I_{0+;x}^{\alpha _{1};\psi }\left( \mathcal{W} _{u_{1},u_{2}}^{p,u}f\left( x,y\right) \right)  \\ 
u_{2}\left( x,y\right)  & = & \Psi ^{\gamma }\left( 0,y\right) h_{x^{2\alpha
}}\left( x\right) +I_{0+;y}^{\alpha _{2};\psi }f\left( x,y,u\left(
x,y\right) ,u_{1}\left( x,y\right) ,u_{2}\left( x,y\right) \right) 
\end{array}%
\right. 
\end{equation}
where $u_{1}\left( x,y\right) =\dfrac{\partial _{\beta ;\psi }^{\alpha }u}{\partial _{\beta ;\psi }y^{\alpha }}\left( x,y\right)$, $u_{2}
\left(x,y\right) =\dfrac{\partial _{\beta ;\psi }^{2\alpha }u}{\partial _{\beta ;\psi }x^{2\alpha }}\left( x,y\right)$, $g_{2y^{\alpha }}
\left(y\right)=\dfrac{\partial _{\beta ;\psi }^{2\alpha }u}{\partial _{\beta ;\psi}x^{2\alpha }}\left(y\right)$, $g_{1y^{\alpha }}
\left( y\right) =\dfrac{\partial _{\beta ;\psi }^{\alpha }u}{\partial _{\beta ;\psi }y^{\alpha }}\left(y\right) $ and $h_{x^{2\alpha }}
\left( x\right) =\dfrac{\partial_{\beta ;\psi }^{2\alpha }u}{\partial _{\beta ;\psi }x^{2\alpha }}\left(x\right)$.

We denote the right hand sides of the equation in \rm{{Eq.(\ref{k13})}} by $A_{1}$, $A_{2}, $ 
$A_{3}$, respectively. The system \rm{{Eq.(\ref{k13})}}, then becomes
\begin{equation*}
\left\{ 
\begin{array}{ccc}
u\left( x,y\right) & = & A_{1}\left( u,u_{1},u_{2}\right) \left( x,y\right)\\ 
u_{1}\left( x,y\right) & = & A_{2}\left( u,u_{1},u_{2}\right) \left(x,y\right) \\ 
u_{2}\left( x,y\right) & = & A_{3}\left( u,u_{1},u_{2}\right) \left(x,y\right)
\end{array}
\right. ,\text{ }u,u_{1},u_{2}\in C\left( \left[ 0,a\right) \times \left[0,b\right) \right).
\end{equation*}

Let $X:=C\left( \left[ 0,a\right) \times \left[ 0,b\right) \right) \times C\left( \left[ 0,a\right) \times \left[ 0,b\right) \right) \times C\left( 
\left[ 0,a\right) \times \left[ 0,b\right) \right) $ and for any $\delta >0$.

Consider the Bielecki norm on $X$:
\begin{equation*}
\left\Vert \left( u,u_{1},u_{2}\right) \right\Vert _{B}:=\max \left\{M_{1},M_{2},M_{3}\right\}
\end{equation*}
with 
\begin{equation*}
\left\{ 
\begin{array}{ccc}
M_{1} & := & \underset{\left( x,y\right) \in \left[ 0,a\right) \times \left[0,b\right) }{\max }\left\vert u\left( x,y\right) \right\vert \exp 
\left(-\delta \left( x+y\right) \right)  \\ 
M_{2} & := & \underset{\left( x,y\right) \in \left[ 0,a\right) \times \left[0,b\right) }{\max }\left\vert u_{1}\left( x,y\right) \right\vert \exp 
\left(-\delta \left( x+y\right) \right)  \\ 
M_{3} & := & \underset{\left( x,y\right) \in \left[ 0,a\right) \times \left[0,b\right) }{\max }\left\vert u_{2}\left( x,y\right) \right\vert \exp 
\left(-\delta \left( x+y\right) \right) 
\end{array}
\right. 
\end{equation*}

Then $\left( X,\left\Vert \cdot \right\Vert _{B}\right) $ is an ordered $L$-space. We will define the operator $A:X\rightarrow X$ by
\begin{equation*}
\left( u,u_{1},u_{2}\right) \rightarrow \left( A_{1}\left(u,u_{1},u_{2}\right) ,\text{ }A_{2}\left( u,u_{1},u_{2}\right), \text{ }A_{3}\left( u,u_{1},u_{2}\right) \right).
\end{equation*}

Using the hypotheses (1)-(3), we have
\begin{equation}\label{ze4}
\left\Vert A\left( \overline{u},\overline{u_{1}},\overline{u_{2}},\right)-A\left( \overline{\overline{u}},\overline{\overline{u_{1}}},\overline{\overline{u_{2}}},
\right) \right\Vert _{B}\leq \frac{L_{f}}{\delta }\left\Vert \left( \overline{u},\overline{u_{1}},\overline{u_{2}},\right)-\left( \overline{\overline{u}},
\overline{\overline{u_{1}}},\overline{\overline{u_{2}}},\right) \right\Vert _{B}.
\end{equation}

Taking $\delta >0$ such that $\dfrac{L_{f}}{\delta }<1$ in relation \rm{Eq.(\ref{ze4})}, the operator $A$ is a contraction and by the contraction principle 
the conclusion follows.

(b) Let $v$ be a solution to the inequality \rm{Eq.(\ref{eq5})}. Let $v$ be the unique solution of the \rm{{Eq.(\ref{eq3})}} satisfying the conditions:
\begin{equation}
\left\{ 
\begin{array}{ccc}
I_{\theta }^{1-\gamma ;\psi }u\left( x,0\right) & = & v\left( x,0\right), \text{ }x\in \left[ 0,a\right] \\ 
I_{\theta }^{1-\gamma ;\psi }u\left( x,0\right) & = & v\left( 0,y\right), \text{ }y\in \left[ 0,b\right] \\ 
u_{x}\left( 0,y\right) & = & v_{x}\left( 0,y\right), \text{ }y\in \left[ 0,b\right]
\end{array}
\right.
\end{equation}
with $\gamma=\alpha+\beta(1-\alpha)$.

From Remark \ref{re3}, the condition $\left( 3\right) $ and Lemma \ref{l1} (Gronwall Lemma), we have that
\begin{eqnarray*}
&&\left\vert v\left( x,y\right) -u\left( x,y\right) \right\vert   \notag \\
&=&\left\vert 
\begin{array}{c}
v\left( x,y\right) -\Psi^{\gamma }\left( 0,y\right)v\left( x,0\right) -\Psi^{\gamma }\left( x,0\right) v\left( 0,y\right) - \Psi^{\gamma }
\left( x,0\right)xv_{x}\left( 0,y\right)\\ -I_{\theta }^{\alpha;\psi }\displaystyle\left( \mathcal{W}_{u_{1},u_{2}}^{p,u}f\left( x,y\right)\right) 
\end{array}
\right\vert  \\
&\leq &\left\vert 
\begin{array}{c}
v\left( x,y\right) -\Psi^{\gamma }\left( 0,y\right) v\left( x,0\right) -\Psi^{\gamma }\left( x,0\right) v\left( 0,y\right) -\Psi^{\gamma }
\left( x,0\right) xv_{x}\left( 0,y\right)\\ -I_{\theta }^{\alpha;\psi }\displaystyle\left( \mathcal{W}_{v_{1},v_{2}}^{p,v}f\left( x,y\right)\right) 
\end{array}
\right\vert   \notag \\
&&+I_{\theta }^{\alpha ;\psi }\left(\widetilde{\mathcal{W}}_{v,u}^{p}f\left( x,y\right)\right)   \notag \\
&\leq &\varepsilon x\dfrac{\left( \psi \left( x\right) -\psi \left( 0\right) \right) ^{\alpha _{1}}\left( \psi \left( y\right) -\psi \left( 0\right) \right) ^{\alpha _{2}}}{\Gamma \left( \alpha _{1}+1\right) \Gamma \left( \alpha _{2}+1\right) }+L_{f}I_{\theta }^{\alpha ;\psi }\left( p\underset{i\in \{1,2,3 \}}{\max }\left\vert u_{i}\left( x,y\right) -v_{i}\left( x,y\right) \right\vert \right)   \notag \\
&\leq &\varepsilon a\dfrac{\left( \psi \left( b\right) -\psi \left( 0\right) \right) ^{\alpha _{2}}\left( \psi \left( a\right) -\psi \left( 0\right) \right) ^{\alpha _{1}}}{\Gamma \left( \alpha _{1}+1\right) \Gamma \left(
\alpha _{2}+1\right) }\mathbb{E}_{\alpha }\left[ L_{f}a\Gamma \left( \alpha _{1}\right) \Gamma \left( \alpha _{2}\right) \left( \psi \left( b\right) -\psi \left( 0\right) \right) ^{\alpha _{2}}\left( \psi \left( a\right)
-\psi \left( 0\right) \right) ^{\alpha _{1}}\right]   \notag \\ &=&C_{f}^{1}\varepsilon 
\end{eqnarray*}
where
\begin{equation*}
C_{f}^{1}:=a\frac{\left( \psi \left( b\right) -\psi \left( 0\right) \right) ^{\alpha _{2}}\left( \psi \left( a\right) -\psi \left( 0\right) \right) ^{\alpha _{1}}}{\Gamma \left( \alpha _{1}+1\right) \Gamma \left( \alpha
_{2}+1\right) }\mathbb{E}_{\alpha }\left[ L_{f}a\Gamma \left( \alpha _{1}\right) \Gamma \left( \alpha _{2}\right) \left( \psi \left( b\right) -\psi \left( 0\right) \right) ^{\alpha _{2}}\left( \psi \left( a\right) -\psi \left( 
0\right) \right) ^{\alpha _{1}}\right]
\end{equation*}
and $\mathbb{E}_{\alpha}(\cdot)$ is the one-parameter Mittag-Leffler function.

By performing the same process as above, we obtain the following inequalities
\begin{eqnarray*}
\left\vert v_{1}\left( x,y\right) -u_{1}\left( x,y\right) \right\vert  &\leq &\varepsilon a\frac{\left( \psi \left( a\right) -\psi \left( 0\right) \right) ^{\alpha _{1}}}{\Gamma \left( \alpha _{1}+1\right) }\mathbb{E}_{\alpha }\left[ L_{f}a\left( \psi \left( a\right) -\psi \left( 0\right) \right) ^{\alpha _{1}}\Gamma \left( \alpha _{1}\right) \right]   \notag \\ &=&C_{f}^{2}\varepsilon 
\end{eqnarray*}
where $C_{f}^{2}:=a\dfrac{\left( \psi \left( a\right) -\psi \left( 0\right) \right) ^{\alpha _{1}}}{\Gamma \left( \alpha _{1}+1\right) }\mathbb{E}_{\alpha }\left[ L_{f}a\left( \psi \left( a\right) -\psi \left( 0\right) \right) ^{\alpha _{1}}\Gamma \left( \alpha _{1}\right) \right] $
and
\begin{eqnarray*}
\left\vert v_{2}\left( x,y\right) -u_{2}\left( x,y\right) \right\vert  &\leq &\varepsilon \frac{\left( \psi \left( b\right) -\psi \left( 0\right) \right) ^{\alpha 2}}{\Gamma \left( \alpha _{2}+1\right) }\mathbb{E}_{\alpha }\left[ L_{f}\left( \psi \left( b\right) -\psi \left( 0\right) \right) ^{\alpha _{2}}\right]   \notag \\
&=&C_{f}^{3}\varepsilon 
\end{eqnarray*}
where $C_{f}^{3}:=\dfrac{\left( \psi \left( b\right) -\psi \left( 0\right) \right) ^{\alpha _{2}}}{\Gamma \left( \alpha _{2}+1\right) }\mathbb{E}_{\alpha }\left[ L_{f}\left( \psi \left( b\right) -\psi \left( 0\right) \right) ^{\alpha _{2}} \right] $.

So, the \rm{Eq.(\ref{eq3})} is Ulam-Hyers stable.
\end{proof}


\subsection{Generalized Ulam-Hyers-Rassias Stability}

\begin{theorem} We suppose that 
\begin{enumerate}
\item $f\in C\left( \left[ 0,\infty \right) \times \left[ 0,\infty \right) \times \mathbb{B}^{3}, \mathbb{B}\right)$; \item 
There exists $L_{f}\in C^{1}\left( \left[ 0,\infty \right) \times \left[ 0,\infty \right), \mathbb{R}_{+}\right) ,$ such that
\begin{equation*}
\left\vert f\left( x,y,u_{1},u_{2},u_{3}\right) -f\left( x,y,v_{1},v_{2},v_{3}\right) \right\vert \leq L_{f}\left( x,y\right) \underset{i\in \{1,2,3 \} }{\max }\left\vert u_{i}-v_{i}\right\vert 
\end{equation*}
for all $x,y\in \left[ 0,\infty \right) $, $u_{1},u_{2},u_{3},v_{1},v_{2},v_{3}\in \mathbb{B}$;

\item There exists $\lambda _{\varphi }^{1},\lambda _{\varphi }^{2},\lambda _{\varphi }^{3}>0$ such that
\begin{equation*}
\left\{ 
\begin{array}{ccc}
I_{\theta }^{\alpha ;\psi }\displaystyle\left( \mathcal{W}^{p}\varphi \left( x,y\right)\right)  & \leq  & \lambda _{\varphi }^{1}\varphi \left( x,y\right)  \\ 
I_{0+;x}^{\alpha _{1};\psi }\displaystyle\left( \mathcal{W}^{p}\varphi \left( x,y\right)\right)  & \leq  & \lambda _{\varphi }^{2}\varphi \left( x,y\right)  \\ 
I_{0+;y}^{\alpha _{2};\psi }\displaystyle\left( \mathcal{W}^{p}\varphi \left( x,y\right)\right)  & \leq  & \lambda _{\varphi }^{3}\varphi \left( x,y\right) 
\end{array}%
\right. 
\end{equation*}
$x,y\in \left[ 0,\infty \right)$.

\item $\varphi:\mathbb{R}_{+}\times \mathbb{R} _{+}\rightarrow \mathbb{R}_{+}$ is increasing.
\end{enumerate}

Then the {\rm{Eq.(\ref{eq3})}} $\left( a=\infty \text{, }b=\infty \right) $ is generalized Ulam-Hyers-Rassias stable.
\end{theorem}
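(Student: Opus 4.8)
The plan is to follow the architecture of the previous theorem, with the uniform $\varepsilon$-bounds replaced by the $\varphi$-controlled bounds coming from Remark \ref{re4} together with hypothesis (3). First I would secure a comparison solution $u$. Exactly as in part (a), recast Eq.(\ref{eq3}), equipped with the initial data of the given $v$, as the fixed-point system $(u,u_1,u_2)=A(u,u_1,u_2)$ with $A=(A_1,A_2,A_3)$ read off from the integral representation Eq.(\ref{k13}), and put the Bielecki norm $\|\cdot\|_B$ on $X$. Since $L_f$ is here a continuous function rather than a constant, I would run the contraction argument on an arbitrary finite rectangle $[0,a]\times[0,b]$, on which $L_f$ is bounded, choosing the Bielecki weight $\delta$ so that $(\sup L_f)/\delta<1$; the contraction principle then gives a unique solution on each such rectangle, and these patch to a unique $u$ on $[0,\infty)\times[0,\infty)$.

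Next, fix a solution $v$ of the inequality Eq.(\ref{eq5}). By Remark \ref{re4} the triple $(v,v_1,v_2)$ satisfies three integral inequalities whose right-hand sides are $I_\theta^{\alpha;\psi}(\mathcal{W}^p\varphi)$, $I_{0+;x}^{\alpha_1;\psi}(\mathcal{W}^p\varphi)$ and $I_{0+;y}^{\alpha_2;\psi}(\mathcal{W}^p\varphi)$. For the first component I would expand $|v-u|$ using the representation $A_1$ for $u$, add and subtract $I_\theta^{\alpha;\psi}(\mathcal{W}_{v_1,v_2}^{p,v}f)$, and split by the triangle inequality into the Remark \ref{re4} term plus $I_\theta^{\alpha;\psi}(\widetilde{\mathcal{W}}_{v,u}^p f)$. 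Hypothesis (3) bounds the first piece by $\lambda_\varphi^1\varphi(x,y)$, while the Lipschitz hypothesis (2) bounds the second by $I_\theta^{\alpha;\psi}(p\,L_f(x,y)\max_i|v_i-u_i|)$.

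Introducing $W:=\max\{|v-u|,|v_1-u_1|,|v_2-u_2|\}$ collapses the three coupled estimates into a single scalar inequality of Gronwall type, to which Lemma \ref{l1} applies and yields a bound of the shape $\lambda_\varphi^1\varphi(x,y)\,\mathbb{E}_\alpha[\cdots]$. Taking $C_{f,\varphi}^1$ to be $\lambda_\varphi^1$ times that Mittag-Leffler factor gives $|v-u|\le C_{f,\varphi}^1\varphi(x,y)$; rerunning the identical computation with $I_{0+;x}^{\alpha_1;\psi}$ and $\lambda_\varphi^2$, and then with $I_{0+;y}^{\alpha_2;\psi}$ and $\lambda_\varphi^3$, produces $C_{f,\varphi}^2$ and $C_{f,\varphi}^3$, so that all three bounds required by the definition of generalized Ulam-Hyers-Rassias stability hold.

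The hard part will be the Gronwall step: Lemma \ref{l1} is one-dimensional, whereas the inequality at hand is genuinely two-variable and carries a mixed fractional integral, and the domain is now the unbounded $[0,\infty)\times[0,\infty)$. Because the Lipschitz term couples the three components through $\max_i|v_i-u_i|$, they cannot be estimated independently, so the reduction to the scalar quantity $W$ (or an equivalent vector Gronwall argument) has to be justified carefully. One must also verify that hypothesis (3) genuinely keeps the Mittag-Leffler prefactor finite and free of $(x,y)$, so that the $C_{f,\varphi}^i$ are true constants rather than functions growing with $x$ and $y$; this is the delicate point that the Rassias weight $\varphi$ is designed to absorb.
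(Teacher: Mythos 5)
Your proposal follows essentially the same route as the paper's proof: take for $u$ the unique solution of Eq.(\ref{eq3}) with $v$'s initial data, write $u$ through the integral system, split $\left\vert v-u\right\vert$ by the triangle inequality into the Remark \ref{re4} term (controlled by $\lambda_{\varphi}^{1}\varphi$ via hypothesis (3)) plus the term $I_{\theta}^{\alpha;\psi}\bigl(\widetilde{\mathcal{W}}_{v,u}^{p}f\bigr)$ controlled by the Lipschitz hypothesis, close with the Gronwall Lemma \ref{l1}, and repeat for $v_{1},v_{2}$ with $\lambda_{\varphi}^{2},\lambda_{\varphi}^{3}$. If anything, you are more careful than the paper, which does not re-establish existence/uniqueness of $u$ on the unbounded domain, never addresses the coupling of the three components that you handle with $W=\max\{\left\vert v-u\right\vert,\left\vert v_{1}-u_{1}\right\vert,\left\vert v_{2}-u_{2}\right\vert\}$, and quietly inserts the unstated assumption $\psi\left(\infty\right)<\infty$ (with $L_{f}\left(x,y\right)$ still inside the Mittag-Leffler factor) precisely at the delicate point you flag about the constants $C_{f,\varphi}^{i}$ being genuinely independent of $\left(x,y\right)$.
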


\begin{proof}
Let $v$ be a solution of the inequality \rm{Eq.(\ref{eq5})} and $u\left( x,y\right) $ the unique solution of the problem
\begin{equation*}
\left\{ 
\begin{array}{cll}
\dfrac{\partial _{\beta ;\psi }^{3\alpha }u}{\partial _{\beta ;\psi }x^{2\alpha }\partial _{\beta ;\psi }y^{\alpha }}\left( x,y\right)  & = & 
f\left( x,y,u\left( x,y\right) ,\dfrac{\partial _{\beta ;\psi }^{\alpha }u}{\partial _{\beta ;\psi }y^{\alpha }}\left( x,y\right) ,
\dfrac{\partial_{\beta ;\psi }^{\alpha }u}{\partial _{\beta ;\psi }x^{2\alpha }}\left( x,y\right) \right) \\ 
I_{\theta }^{1-\gamma ;\psi }u\left( x,0\right)  & = & v\left( x,0\right) \\ 
I_{\theta }^{1-\gamma ;\psi }u\left( 0,y\right)  & = & v\left( 0,y\right) \\ 
v_{x}\left( 0,y\right)  & = & v_{x}\left( 0,y\right)
\end{array}
\right.
\end{equation*}
$x,y\in \left[ 0,\infty \right)$ and with $\gamma=\alpha+\beta(1-\alpha)$, then $\left( u,u_{1},u_{2}\right) $ is a solution of the system:
\begin{equation}
\left\{ 
\begin{array}{cll}
u\left( x,y\right)  & = & \Psi ^{\gamma }\left( 0,y\right) v\left(
x,0\right) +\Psi ^{\gamma }\left( x,0\right) v\left( 0,y\right) +\Psi
^{\gamma }\left( x,0\right) xv_{x}\left( 0,y\right) +I_{\theta }^{\alpha
;\psi }\left( \mathcal{W}_{u_{1},u_{2}}^{p,u}f\left( x,y\right) \right)  \\ 
u_{1}\left( x,y\right)  & = & \Psi ^{\gamma }\left( x,0\right) v_{1}\left(
0,y\right) +\Psi ^{\gamma }\left( x,0\right) xv_{1x}\left( 0,y\right)
+I_{0+;x}^{\alpha ;\psi }\left( \mathcal{W}_{u_{1},u_{2}}^{p,u}f\left(
x,y\right) \right)  \\ 
u_{2}\left( x,y\right)  & = & \Psi ^{\gamma }\left( 0,y\right) v_{2}\left(
x,0\right) +I_{\theta }^{\alpha ;\psi }\left( f\left( x,y,u\left( x,y\right)
,u_{1}\left( x,y\right) ,u_{2}\left( x,y\right) \right) \right) 
\end{array}%
\right. 
\end{equation}

On the other hand, by Remark \ref{re4} and using the hypotheses $\left( 3\right)$, we get
\begin{eqnarray}\label{x1}
&&\left\vert 
\begin{array}{c}
v\left( x,y\right) - \Psi^{\gamma }\left( 0,y\right)v\left( x,0\right) - \Psi^{\gamma }\left( x,0\right) v\left( 0,y\right) -\Psi^{\gamma }
\left( x,0\right) xv_{x}\left( 0,y\right) \\-I_{\theta }^{\alpha ;\psi }\displaystyle\left( \mathcal{W}_{u_{1},u_{2}}^{p,u}f\left( x,y\right)\right) 
\end{array}
\right\vert   \notag \\
&\leq &I_{\theta }^{\alpha ;\psi }\left( \mathcal{W}^{p}\varphi \left( x,y\right)\right) \leq \lambda _{\varphi }^{1}\varphi \left( x,y\right) ,%
\text{ } x,y\in \left[ 0,\infty \right);
\end{eqnarray}
\begin{eqnarray}\label{x2}
&&\left\vert 
\begin{array}{c}
v_{1}\left( x,y\right) -\Psi^{\gamma }\left( x,0\right) v_{1}\left( 0,y\right) -\Psi^{\gamma }\left( x,0\right)xv_{1x}\left( 0,y\right) -
I_{0+;x}^{\alpha _{1};\psi }\displaystyle\left( \mathcal{W}_{u_{1},u_{2}}^{p,u}f\left( x,y\right)\right) 
\end{array}%
\right\vert   \notag \\
&\leq &I_{0+;x}^{\alpha _{1};\psi }\left( \mathcal{W}^{p}\varphi \left( x,y\right)\right) \leq \lambda _{\varphi }^{2}\varphi \left( x,y\right), 
\text{ }x,y\in \left[ 0,\infty \right); 
\end{eqnarray}%
\begin{eqnarray}\label{x3}
&&\left\vert v_{2}\left( x,y\right) - \Psi^{\gamma }\left( 0,y\right) v_{2}\left( x,0\right) -I_{0+;y}^{\alpha _{2};\psi }\left( f\left(x,y,u
\left( x,y\right) ,u_{1}\left( x,y\right) ,u_{2}\left( x,y\right) \right) \right) \right\vert   \notag \\
&\leq &\lambda _{\varphi }^{3}\varphi \left( x,y\right) ,\text{ }x,y\in \left[ 0,\infty \right).
\end{eqnarray}

In this sense, from \rm{Eq.(\ref{x1})}, \rm{Eq.(\ref{x2})}, \rm{Eq.(\ref{x3})} and using the Gronwall Lemma \ref{l1}, we obtain
\begin{eqnarray}\label{x5}
&&\left\vert v\left( x,y\right) -u\left( x,y\right) \right\vert   \notag \\
&\leq &\left\vert 
\begin{array}{c}
v\left( x,y\right) -\Psi^{\gamma }\left( 0,y\right) v\left( x,0\right) - \Psi^{\gamma }\left( x,0\right) v\left( 0,y\right) -\Psi^{\gamma }\left( x,0\right) xv_{x}\left( 0,y\right)\\ -I_{\theta
}^{\alpha ;\psi }\displaystyle\left( \mathcal{W}_{v_{1},v_{2}}^{p,v}f\left( x,y\right)\right) 
\end{array}%
\right\vert   \notag \\
&&+I_{\theta }^{\alpha ;\psi }\left( \widetilde{\mathcal{W}}_{v,u}^{p}f\left( x,y\right)\right)   \notag \\
&\leq &\lambda _{\varphi }^{1}\varphi \left( x,y\right) +I_{\theta }^{\alpha
;\psi }\left( \int_{0}^{p}L_{f}\left( x,y\right) \underset{i\in \{1,2,3 \} }{\max }\left\vert v_{i}\left( x,y\right) -u_{i}\left( x,y\right)
\right\vert dp\right)   \notag \\
&\leq &\lambda _{\varphi }^{1}\varphi \left( x,y\right) \mathbb{E}_{\alpha }%
\left[ \int_{0}^{p}L_{f}\left( x,y\right) \Gamma \left( \alpha _{1}\right)
\Gamma \left( \alpha _{2}\right) \left( \psi \left( b\right) -\psi \left(
0\right) \right) ^{\alpha _{2}}\left( \psi \left( a\right) -\psi \left(
0\right) \right) ^{\alpha _{1}}dp\right]   \notag \\
&\leq &\lambda _{\varphi }^{1}\varphi \left( x,y\right) \mathbb{E}_{\alpha }%
\left[ \int_{0}^{p}L_{f}\left( x,y\right) \Gamma \left( \alpha _{1}\right)
\Gamma \left( \alpha _{2}\right) \left( \psi \left( \infty \right) -\psi
\left( 0\right) \right) ^{\alpha _{2}}\left( \psi \left( \infty \right)
-\psi \left( 0\right) \right) ^{\alpha _{1}}dp\right]   \notag \\
&=&C_{f;\varphi }^{1}\varphi \left( x,y\right) 
\end{eqnarray}
where $C_{f;\varphi }^{1}:=\lambda _{\varphi }^{1}\mathbb{E}_{\alpha }\displaystyle\left[ \int_{0}^{p}L_{f}\left( x,y\right) \Gamma 
\left( \alpha _{1}\right) \Gamma \left( \alpha _{2}\right) \left( \psi \left( \infty \right) -\psi \left( 0\right) 
\right) ^{\alpha _{2}}\left( \psi \left( \infty \right) -\psi \left( 0\right) \right) ^{\alpha _{1}}dp\right] ,$ 
$\dfrac{2}{3}<\alpha \leq 1 $ and $\psi \left( \infty \right) <\infty ,$ $x,y\in \left[ 0,\infty \right)$ and 
$\mathbb{E}_{\alpha}(\cdot)$ is the one-parameter Mittag-Leffler function.

Performing the same steps as above to obtain the inequality \rm{Eq.(\ref{x5})}, we can write
\begin{equation*}
\left\vert v_{1}\left( x,y\right) -u_{1}\left( x,y\right) \right\vert \leq C_{f,\varphi }^{2}\varphi \left( x,y\right) 
\end{equation*}
where $C_{f,\varphi }^{2}:=\lambda _{\varphi }^{2}\mathbb{E}_{\alpha }\displaystyle\left[ \int_{0}^{p}L_{f}\left( x,y\right) \Gamma 
\left( \alpha _{1}\right) \left( \psi \left( \infty \right) -\psi \left( 0\right) \right) ^{\alpha _{1}}dp \right] ,$ $\dfrac{2}{3}<\alpha 
\leq 1$ and $\psi \left( \infty \right) <\infty$, $ x,y\in \left[ 0,\infty \right)$.

Also, we get
\begin{equation*}
\left\vert v_{2}\left( x,y\right) -u_{2}\left( x,y\right) \right\vert \leq C_{f,\varphi }^{3}\varphi \left( x,y\right) 
\end{equation*}
where $C_{f,\varphi }^{3}:=\lambda _{\varphi }^{3}\mathbb{E}_{\alpha }\displaystyle\left[ \int_{0}^{p}L_{f}\left( x,y\right) \Gamma 
\left( \alpha _{2}\right) \left( \psi \left( \infty \right) -\psi \left( 0\right) \right) ^{\alpha _{2}}dp \right]$, $\dfrac{2}{3}<\alpha 
\leq 1$ and $\psi \left( \infty \right) <\infty$, $x,y\in \left[ 0,\infty \right)$.

Then, the solution of the \rm{Eq.(\ref{eq3})} is generalized Ulam-Hyers-Rassias stable.

\end{proof}

\section{Concluding remarks}

In this paper, we have presented and proposed new stability results of the Ulam-Hyers type of solutions of fractional partial differential equations, contributing to the diffusion of results in the fractional calculus theme, particularly, in the fractional analysis, significantly enriching this field of study.

In this sense, we have been successful in presenting stability results of Ulam-Hyers and generalized Ulam-Hyers-Rassias solution of a fractional order pseudoparabolic partial differential equation, using the $\psi$-Hilfer fractional partial derivative of $N$ variables and the Gronwall inequality. It is also possible to note that there are authors who use in their works, the Banach fixed-point theorem to discuss stability \cite{esto1,esto4}.

As mentioned in the introduction, the study of the stability of solutions of fractional differential equations is in a growing, and numerous researchers have contributed to such advancement. However, it is still possible to obtain other new Ulam-Hyers stability results from other types of partial differential equations: hyperbolic, parabolic and elliptical. On the other hand, a theme that gains special attention is the study of stability of the solution of the linear heat equation, by means of integral transforms, that is, the Fourier transform and the 
Laplace transform. Studies in this direction must be presented and published in the near future. 
\bibliography{ref}
\bibliographystyle{plain}

\end{document}